\newfont{\suet}{suet14}
\newfont{\schwell}{schwell}
\newtheorem{teo}{Theorem}[section]
\newtheorem{lem}[teo]{Lemma}
\newtheorem{prop}[teo]{Proposition}
\newtheorem{cor}[teo]{Corollary}
 \newtheoremstyle{drem}
      {3pt}
      {3pt}
      {\rmfamily}
      {}
      {\bf}
      {:}
      {.5em}
      {}
 \theoremstyle{drem}
\newtheorem{exa}[teo]{Example}
\newcommand{\eg}[0]{\emph{e.g.} }
\newcommand{\ie}[0]{\emph{i.e.} }
\newcommand{\jo}[1]{\mathcal{#1}}
\newcommand{\pgen}[1]{\langle #1 \rangle}
\newcommand{\eps}[0]{\varepsilon}
\newcommand{\rr}[0]{\ensuremath{\mathbb{R}}}
\newcommand{\cc}[0]{\ensuremath{\mathbb{C}}}
\newcommand{\zz}[0]{\ensuremath{\mathbb{Z}}}
\newcommand{\del}[0]{\ensuremath{\partial}}
\newcommand{\Id}[0]{\mathrm{Id}}
\newcommand{\img}[0]{\mathrm{Im}\,}
\newcommand{\sgn}[0]{\ensuremath{\mathrm{sgn}}}
\newcommand{\maxx}[1]{\textrm{\raisebox{.5ex}{\mbox{$\underset{#1}{\max}$}}} \:}
\newcommand{\minn}[1]{\textrm{\raisebox{.5ex}{\mbox{$\underset{#1}{\min}$}}} \:}
\newcommand{\vide}[0]{\varnothing}
\newcommand{\comp}[0]{\mathsf{c}}
\begin{document}
\centerline{\Large An isoperimetric constant for signed graphs} 

\vspace*{1cm}

\renewcommand*{\thefootnote}{\fnsymbol{footnote}}
\centerline{\large Antoine Gournay\footnotemark} 

\vspace*{1cm}

%
%
%
%

\footnotetext{T.U. Dresden, Fachrichtung Mathematik, Institut für Geometrie, 01062 Dresden, Germany.}

\setcounter{footnote}{0}
\renewcommand*{\thefootnote}{\roman{footnote}}

\begin{abstract}
We introduce a sign in the usual Laplacian on graphs and present the corresponding analogue of the isoperimetric constant for this Laplacian, \ie a geometric quantity which enables to bound from above and below the first eigenvalue. The introduction of the sign in the Laplacian is motivated by the study of $2$-lifts of graphs and of the combinatorial Laplacian in higher degree.
\end{abstract}

\section{Introduction}\label{s-intro}

\subsection{Signed graphs and twisted Laplacian}

Take a graph and assign to each edge $e$ a sign $\sgn e \in \{-1,1\}$. The twisted Laplacian with respect to this signing is the operator $\Delta^\tau: \rr^V \to \rr^V$ defined by \[
\Delta^\tau f(x) = |N(x)| f(x) - \sum_{y \in N(x)} \sgn (x,y) f(y)                                                                                                                                                                                                                                                                                                                                                                        \]
where $N(x)$ is the set of vertices neighbouring $x$ (and $|N(x)|$ is the valency of $x$). The main sources of interest to study such operators come from $2$-lifts and higher-order Laplacian. The main result of this paper is to show lower bound on the spectrum of $\Delta^\tau$.

To each cycle $C$, one may assign $\sgn C = \prod_{e \in C} \sgn e$. A coherent signing (or a coherently signed graph) is a signing such that every cycle has positive sign. \ie there is an even number of negatively signed edges along each cycle. 

\begin{exa}\label{ex-cohsig}
If all edges have $\sgn e = -1$, then the graph is coherently signed if and only if there are no odd cycles. This happens if and only if the graph is bipartite. \hfill $\Diamond$

\end{exa}

Note that putting a negative sign on all edges gives the ``signless Laplacian'', see \cite[\S{}7.8]{CRS}.

Given $S \subset V$, denote $\del S$ to be the set of edges between $S$ and $S^\comp$. Let $e_{mc}(S)$ to be the minimal number of edges that need to be removed so that the graph induced on $S$ is coherently signed. 
Let
\[
\psi(S) = \frac{ |\del S| + 2e_{mc}(S)}{|S|} \qquad \textrm{ and } \qquad \psi(G) = \minn{\vide \neq S \subset G} \psi(S). 
\]
One should probably stress that there is no upper bound on the size of $S$ (\eg $|S| \leq |V|/2$). This comes from the fact that the twisted Laplacian has ``generically'' no kernel, \ie the smallest \emph{possibly zero} eigenvalue is the eigenvalue of interest. The main result of this note is to shown that $\psi(G)$ is the correct analogue to the isoperimetric constant in this situation. Desai \& Rao \cite{DR} already introduced a similar quantity for graphs where the signs are all negative.

By proposition \ref{tbase-p}, this constant is $0$ if and only if the graph has a coherent signing. Note that in the classical case all signs are $+$, hence the signing is always coherent. This is to be expected since $\psi(G)$ bounds above and below the smallest (possibly $0$!) eigenvalue. Some interpretation of $\psi$ can be found at the beginning of \S{}\ref{bili}.

\subsection{Results}

A first (and easy) result (see \S{}\ref{s-basprop}) is that the spectrum of the twisted Laplacian is always $\geq 0$. Some bounds are more naturally expressed in terms of a quantity with an almost cosmetic difference to $\psi$: $\tilde{\psi}(G) = \minn{\vide \neq S \subset G} \frac{ |\del S| + 4 e_{mc}(S)}{|S|}$.

\begin{teo}\label{leteo}
Let $G$ be a signed graph and $\Delta^\tau$ its twisted Laplacian. Let $\mu_1$ be the smallest eigenvalue of $\Delta^\tau$ then 
\[
\frac{\psi(G)^2}{2 d_{max}} \leq d_{max} - \sqrt{ d_{max}^2 - \psi(G)^2 }  \leq \mu_1 \leq \tilde{\psi}(G) \leq 2 \psi(G).
\]
\end{teo}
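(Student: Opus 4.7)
\emph{Outer algebraic pieces.} For $\psi^2/(2d_{max}) \leq d_{max} - \sqrt{d_{max}^2 - \psi^2}$, I first check $\psi \leq d_{max}$: taking $S = \{x\}$ gives $\psi(\{x\}) = d(x) \leq d_{max}$, so the square root is real; then rationalise $d_{max} - \sqrt{d_{max}^2-\psi^2} = \psi^2/(d_{max}+\sqrt{d_{max}^2-\psi^2}) \geq \psi^2/(2d_{max})$. The identity $\tilde\psi(S) = \psi(S) + 2 e_{mc}(S)/|S| \leq 2\psi(S)$ is termwise and survives the minimum, giving $\tilde\psi(G) \leq 2\psi(G)$. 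The upper bound $\mu_1 \leq \tilde\psi(G)$ comes from a test function: for each nonempty $S$, fix a signing $\sigma\colon S \to \{\pm 1\}$ realising $e_{mc}(S)$ and plug $f = \sigma\cdot\mathbf{1}_S$ into the Rayleigh quotient; an edge-by-edge expansion of $Q(f) = \sum_{\{x,y\}\in E}(f(x) - \sgn(x,y) f(y))^2$ gives $Q(f) = |\del S| + 4 e_{mc}(S)$ (boundary edges contribute $1$, aligned interior edges $0$, mismatched interior edges $(2\sigma(x))^2 = 4$) and $\|f\|^2 = |S|$, whence $\mu_1 \leq \tilde\psi(S)$.

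The heart is the lower bound $d_{max} - \sqrt{d_{max}^2 - \psi^2} \leq \mu_1$, which I rewrite as $\psi(G)^2 \leq \mu_1(2 d_{max} - \mu_1)$. My plan is to pass to the \emph{sign cover} $\hat G$: vertex set $V\times\{\pm\}$, with an unsigned edge $\{(x,\epsilon),(y,\epsilon')\}$ present exactly when $\{x,y\}\in E(G)$ and $\epsilon\epsilon' = \sgn(x,y)$. Then $d_{max}(\hat G) = d_{max}$; the flip $\iota(x,\epsilon) = (x,-\epsilon)$ commutes with the ordinary Laplacian $\hat\Delta$; and $f \mapsto f(\cdot,+)$ is a unitary equivalence between $\hat\Delta$ on the antisymmetric subspace $\{f : f\circ\iota = -f\}$ and $\Delta^\tau$ on $G$, so $\mu_1 = \min\mathrm{spec}(\hat\Delta|_{\mathrm{anti}})$. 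A nonempty antisymmetric subset $\hat S \subset V(\hat G)$ (i.e.\ $\hat S \cap \iota\hat S = \vide$) is exactly of the form $\{(x,\phi(x)) : x\in S\}$ for some nonempty $S \subset V$ and some $\phi\colon S\to\{\pm\}$; counting $\hat G$-edges across $\hat S$ gives
\[
|\del \hat S|_{\hat G} = |\del S|_G + 2\cdot\#\{(x,y)\in G[S]: \phi(x)\phi(y)\neq \sgn(x,y)\},
\]
which minimised over $\phi$ for fixed $S$ equals $|\del S|_G + 2 e_{mc}(S)$. Hence the antisymmetric isoperimetric constant of $\hat G$ coincides with $\psi(G)$.

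The classical Alon--Milman argument then transfers into the antisymmetric subspace. Take an antisymmetric eigenfunction $f$ of $\hat\Delta$ at $\mu_1$ and set $g = f^+ = \max(f,0)$, whose support $U = \{v \in V(\hat G) : f(v) > 0\}$ is antisymmetric. Expanding $\langle g, \hat\Delta g\rangle$ and using $\hat\Delta f = \mu_1 f$ yields
\[
\langle g, \hat\Delta g\rangle = \mu_1 \|g\|^2 + \sum_{v \in U,\; u \notin U} A_{\hat G}(u,v)\, g(v) f(u) \leq \mu_1\|g\|^2,
\]
since the cross sum is nonpositive ($g(v) > 0$, $f(u) \leq 0$). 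All sublevel sets $\hat T_t = \{v : g(v)^2 > t\}$ are contained in $U$, hence antisymmetric, so $|\del \hat T_t|_{\hat G} \geq \psi(G)|\hat T_t|$; combining the coarea formula $\int |\del \hat T_t|_{\hat G}\,\dd t = \sum_{\{u,v\}\in E(\hat G)}|g(u)^2 - g(v)^2|$ with the identity $|g(u)^2 - g(v)^2| = |g(u) - g(v)|(g(u)+g(v))$, Cauchy--Schwarz, and $\sum(g(u)+g(v))^2 \leq 2d_{max}\|g\|^2 - \sum(g(u)-g(v))^2$ produces
\[
\psi(G)\|g\|^2 \leq \sum_{\{u,v\}\in E(\hat G)}|g(u)^2 - g(v)^2| \leq \|g\|^2\sqrt{\mu_1(2d_{max}-\mu_1)}
\]
as long as $\mu_1 \leq d_{max}$ (otherwise the target bound is trivial, since $d_{max} - \sqrt{d_{max}^2 - \psi^2} \leq d_{max}$). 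Squaring yields the claim. The main obstacle is the edge-counting identification of the antisymmetric isoperimetric constant with $\psi(G)$; after that, the Alon--Milman translation is automatic because $f^\pm$ always live on antisymmetric subsets of $V(\hat G)$.
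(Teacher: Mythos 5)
Your proof is correct, and its broad outline matches the paper's: the outer algebraic inequalities and the upper bound $\mu_1 \leq \tilde\psi(G)$ via the test function $\sigma\cdot\mathbf{1}_S$ are essentially identical to the paper's treatment (your $\sigma\cdot\mathbf{1}_S$ is the paper's $r_{S,T}$ with $S = \sigma^{-1}(1)$, $T = \sigma^{-1}(-1)$, and your identity $e_{mc}(S) = \min_\sigma\#\{e : \sigma(x)\sigma(y)\neq\sgn(e)\}$ is used implicitly there as well).

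For the lower bound, however, you take a genuinely different and arguably cleaner route. The paper constructs an auxiliary unsigned graph $G'$ that \emph{depends on the eigenfunction} (the edges put crosswise between the two copies are exactly those in $E^-(S)\cup E^-(T)\cup E^+(S:T)$, where $S$ and $T$ are the positive and negative supports of $f$), transfers $|f|$ supported on one copy, and then cites a Cheeger-type lemma from \cite{CRS} as a black box. You instead build the \emph{canonical} sign double cover $\hat G$ once and for all, prove that $\Delta^\tau$ is unitarily equivalent to $\hat\Delta$ on the antisymmetric subspace, identify $\psi(G)$ with the isoperimetric constant of $\hat G$ restricted to antisymmetric subsets (your edge-counting identity $|\del\hat S|_{\hat G} = |\del S|_G + 2\cdot\#\{\text{mismatched edges}\}$ is correct), and then run the Alon--Milman argument inline. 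This buys you two things the paper's proof has to handle separately: the sublevel sets of $g = f^+$ are automatically antisymmetric, hence automatically of size $\leq |V(\hat G)|/2$, so no size hypothesis is needed; and the degenerate case where $f$ never vanishes and the signing is already coherent is absorbed into the general argument rather than split off. The paper's version is shorter (it outsources the core estimate to Lemma~\ref{tlemh-l}), yours is more self-contained and makes the role of the double cover transparent. Both give the same constants.

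One small remark on presentation: it would be worth stating explicitly that the equivalence ``coherently signed $\Leftrightarrow$ $\exists\sigma$ with $\sgn(e) = \sigma(x)\sigma(y)$ on every edge'' is what makes $e_{mc}(S) = \min_\sigma\#\{e\in E(S) : \sgn(e)\neq\sigma(x)\sigma(y)\}$; you use this both in the upper bound and in the identification of the antisymmetric isoperimetric constant, and while it follows from Proposition~\ref{tbase-p}, it deserves a sentence.
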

It would be possible to add a weight to edges. If this is the case, the same result holds but with the quantities $|\del S|$ and $e_{mc}(S)$ counted with weights.


Two applications of this result are discussed. 

The first (see \S{}\ref{bili}) comes from a method introduced by Bilu \& Linial in \cite{BL} to produce expander graphs. They show the problem can be reduced to computing the spectrum of a signed adjacency matrix. In the case where the graph is regular, bounding the largest value of this signed adjacency matrix is equivalent to bounding the smallest value of the twisted Laplacian. Though the conjecture from Bilu \& Linial \cite{BL} has now been solved by Marcus, Spielman \& Srivatava \cite{MSS} (in the bipartite case), it might still be useful to have a geometric interpretation for which signings are ``good'' or ``bad''.

The second (see \S{}\ref{s-hideg}) comes from the Hodge-de Rham Laplacian on forms. Indeed, Mantuano \cite{Man} showed that the spectrum of this operator can be approximated by a combinatorial Laplacian in higher degree (\ie made of higher degree boundary and coboundary operators) of a simplicial complex coming from the \v{C}ech simplex of a sufficiently fine open cover. Hence, giving bounds for the discrete higher degree Laplacian is also a way to give bounds for the Hodge-de Rham Laplacian on forms. Furthermore, it could be an good inspiration to find the proper analogue of the Cheeger constant for forms.


A natural question to ask would be to investigate what is the correct quantity to bound the second smallest eigenvalue (when the smallest eigenvalues is $0$). It seems probable that the correct quantity to look can be done in analogy to Daneshgar \& al. \cite{Dan}: for example, for the second smallest eigenvalue, one should probably look at $\tfrac{1}{2} \minn{S,T} \Big( \psi(S) + \psi(T) \Big)$ where the minimum is over finite disjoint subsets.

{\it Acknowledgements:} The author would like to thank A.~Valette for pointing out corrections and clarifications and B.~Colbois for references to the work of T.~Mantuano and discussions around the Hodge-de Rham Laplacian. 

\section{Basic properties of the twisted Laplacian}\label{s-basprop}

Recall $N(x)$ is the set of neighbours of $x$. The valency of $x$ is denoted $d(x) = |N(x)|$. The maximal valency is $d_{max}:= \maxx{x \in V} d(x)$.

 Let $\nabla^\tau f(x,y) = f(y) - \sgn (x,y) f(x)$ be an operator from functions on vertices to function on (oriented) edges. Then,
\[
\pgen{ \nabla^{*\tau} \delta_{(x,y)} , \delta_z} = \pgen{ \delta_{(x,y)} , \nabla^\tau \delta_z} = \left \lbrace \begin{array}{ll}
1 & \textrm{if } z=y \\
-\sgn (x,y) & \textrm{if } z=x \\
0 & \textrm{else}.
\end{array}
\right. 
\]
Note further that, $\nabla^{*\tau} \nabla^\tau f = 2 \Delta^\tau f$. Indeed,
\[
\begin{array}{rl}
\nabla^{*\tau} \nabla^\tau f (w)  
&= \sum_{x \in N(w)} \nabla^\tau f(x,w) -  \sum_{y \in N(w)} \sgn (w,y) \nabla^\tau f(y,w) \\
&= \sum_{x \in N(w)} ( f(w) - \sgn(x,w) f(x) ) -  \sum_{y \in N(w)} \sgn (w,y) ( f(y) - \sgn(w,y) f(w)) \\
&= 2 \big( d(w) f(w)  - \sum_{x \in N(w)} \sgn(x,w) f(x) \big).
\end{array}
\]

\subsection{Coherent signings}

Let $r_{S,T}(x) = 1$ if $x \in S$, $-1$ if $x \in T$ and $0$ else.
\begin{prop}\label{tbase-p}
$\Delta^\tau$ is positive semi-definite. Assume further $G$ is connected, then the following are equivalent:
\begin{enumerate}\renewcommand{\labelenumi}{\normalfont (\alph{enumi})}\setlength{\itemsep}{0pt} 
\item $\Delta^\tau$ has a kernel; 
\item $G$ is coherently signed;
\item $\psi(G) =0$.
\end{enumerate}
\end{prop}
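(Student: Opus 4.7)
The plan is to use the factorization $\nabla^{*\tau}\nabla^\tau = 2\Delta^\tau$ established just above in the excerpt, and then prove the three equivalences (a)$\Rightarrow$(b)$\Rightarrow$(a), with (c) being essentially a restatement of (b) under connectedness.

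Positive semi-definiteness is immediate: for any $f \in \rr^V$,
\[
\pgen{\Delta^\tau f, f} = \tfrac{1}{2}\pgen{\nabla^{*\tau}\nabla^\tau f, f} = \tfrac{1}{2}\|\nabla^\tau f\|^2 \geq 0.
\]
In particular, $f \in \ker \Delta^\tau$ iff $\nabla^\tau f = 0$, i.e.\ iff $f(y) = \sgn(x,y)\, f(x)$ for every edge $(x,y)$. I will use this reformulation of (a) throughout.

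For (a)$\Rightarrow$(b), suppose $f \neq 0$ satisfies $\nabla^\tau f = 0$. The edge relation $f(y) = \pm f(x)$ combined with connectedness of $G$ forces $|f|$ to be constant, hence $f$ is nowhere zero. Then for any cycle $C: x_0 \to x_1 \to \cdots \to x_n = x_0$, iterating the relation gives $f(x_0) = \big(\prod_i \sgn(x_{i-1}, x_i)\big) f(x_0) = \sgn(C)\, f(x_0)$, and since $f(x_0) \neq 0$ we conclude $\sgn(C) = 1$. So $G$ is coherently signed.

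For (b)$\Rightarrow$(a), fix $x_0 \in V$ and set $f(x_0) = 1$. For any $y \in V$, connectedness provides a path $x_0 = z_0, z_1, \ldots, z_k = y$, and I define $f(y) = \prod_{i=1}^k \sgn(z_{i-1}, z_i)$. The well-definedness is the one point to verify: two such paths concatenate (one reversed) to a closed walk, which decomposes into cycles; coherence gives sign $+1$ on each cycle, so both definitions agree. By construction $\nabla^\tau f = 0$, so $f \in \ker \Delta^\tau$.

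Finally, (b)$\Leftrightarrow$(c) is read off from the definition of $\psi$. If $\psi(G) = 0$, pick $S \neq \vide$ with $|\del S| + 2 e_{mc}(S) = 0$; both terms being nonnegative forces $|\del S| = 0$, and connectedness of $G$ then gives $S = V$, while $e_{mc}(V) = 0$ means the entire signed graph is already coherent. Conversely, if $G$ is coherently signed, take $S = V$ to get $\psi(G) \leq \psi(V) = 0$. None of the steps is technically difficult; the only spot requiring care is checking that the function constructed in (b)$\Rightarrow$(a) is well defined, which is exactly where the coherence hypothesis is used.
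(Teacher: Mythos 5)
Your proof is correct and follows essentially the same route as the paper: positive semi-definiteness from the factorization $\nabla^{*\tau}\nabla^\tau = 2\Delta^\tau$, the equivalence (a)$\Leftrightarrow$(b) via the observation that a kernel element has constant modulus and its sign-pattern along a cycle detects $\sgn C$, and (b)$\Leftrightarrow$(c) by direct inspection of the definition of $\psi$. The only cosmetic difference is that the paper asserts the existence of a partition $V = S \cup T$ with $r_{S,T}$ in the kernel (proving it in the immediately following lemma), whereas you build the kernel vector directly as a product of edge signs along paths and verify well-definedness; these are the same construction, and you have also fixed the paper's small factor-of-two slip in the semi-definiteness display.
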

\begin{proof}
Note that $\pgen{f, \Delta^\tau f} = 2\pgen{f, \nabla^{\tau*} \nabla^\tau f} = 2\pgen{\nabla^{\tau}f,  \nabla^\tau f}= 2 \|\nabla^\tau f \|_2^2 \geq 0$. This shows that $\Delta^\tau$ is positive semi-definite.

If $\Delta^\tau$ has a kernel, this would imply $\|\nabla^\tau f \|_2 =0$ for some non-trivial $f$. This means that $|f|$ is constant on connected components. Furthermore, the function $f$ does not change sign across a positively signed edge and changes sign across negatively signed ones. Since $f \neq 0$ and $G$ is connected, there must be an even number of change of sign along any cycle. This implies $G$ is coherently signed. 

On the other hand if $G$ is coherently signed, then there are two sets $S$ and $T$ with $S \cup T = V$, all edges internal to $S$ or $T$ are positively signed and all edges between $S$ and $T$ are negatively signed. This implies $r_{S,T}$ is in the kernel of $\Delta^\tau$.

If $\psi(G)=0$ then there is a $S \subset V$ with $|\del S|=0$ and $e_{mc}(S)=0$. The first equality implies $S=V$ and the second that $G$ is coherently signed. If $G$ is coherently signed then it is obvious that $\psi(G)=0$. 
\end{proof}
For $S,T \subset V$ subsets of vertices, let $E(S) = E \cap (S \times S)$ be the set of edges with both ends in $S$, $E(S:T) = E \cap (S \times T \cup T \times S)$ be the set of edges with one end in $S$ and the other in $T$,
\[
E^\pm(S) = \{e \in E(S) \mid \sgn e = \pm 1 \} \qquad \text{and} \qquad E^\pm(S:T) = \{e \in E(S:T) \mid \sgn e =\pm 1 \}.  
\]
\begin{lem}
If $G$ is coherently signed, then the set of negatively signed edges is a cut-set.
\end{lem}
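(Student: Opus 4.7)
The plan is to exhibit a bipartition $(S, S^\comp)$ of $V$ such that the negatively signed edges are exactly $E(S : S^\comp)$; this is the very definition of a cut-set. The argument being local to each connected component, I assume $G$ is connected (the general case follows by taking the disjoint union of the sets produced in each component).

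Fix $v_0 \in V$ and, for each $v \in V$, choose a walk $P_v$ from $v_0$ to $v$. Define
\[
f(v) = \prod_{e \in P_v} \sgn e \in \{-1, +1\}.
\]
The first (and only nontrivial) step is to verify that $f(v)$ does not depend on the choice of walk. Given two walks $P, P'$ from $v_0$ to $v$, their concatenation $P \cdot (P')^{-1}$ is a closed walk at $v_0$; modulo pairs of repeated edges (each contributing $\sgn(e)^2 = 1$ to the product), it decomposes as a concatenation of simple cycles, each of which has $\sgn C = +1$ by coherence. Hence $\prod_{e \in P} \sgn e = \prod_{e \in P'} \sgn e$, so $f$ is well-defined.

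With $f$ at hand, set $S = f^{-1}(+1)$. For any edge $e = (u,v)$, extending $P_u$ by $e$ gives a walk from $v_0$ to $v$, whence $f(v) = f(u) \cdot \sgn e$. Therefore $\sgn e = -1$ if and only if $f(u) \neq f(v)$, if and only if $e \in E(S : S^\comp)$. The set of negatively signed edges thus coincides with the edge-boundary $\del S$, which is a cut-set by definition.

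The only point requiring care is the well-definedness of $f$: the cleanest route is to invoke the fact that the cycle space of $G$ over $\mathbb{F}_2$ is spanned by simple cycles, so any closed walk has the same sign-product as a sum of simple cycles. Alternatively, one may proceed by induction on the length of $P \cdot (P')^{-1}$, at each step excising a chord or the shortest simple cycle it contains.
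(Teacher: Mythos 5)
Your proof is correct and takes essentially the same approach as the paper: fix a base vertex, define a $\pm 1$-valued potential via the product of signs along a path, use coherence to show well-definedness, and read off the bipartition. You are somewhat more careful than the paper about justifying that closed walks reduce to simple cycles, but the underlying argument is identical.
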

\begin{proof}
Assume without loss of generality that $G$ is connected. If $G$ is coherently signed, one can define a function $g$ by picking a random vertex $x_0 \in V$ and $g(x) = (-1)^n$ where $n$ is the number of negatively signed edge on a path from $x_0$ to $x$. Since two different paths differ by cycles, the parity of $n$ (hence the values of $g$) does not depend on the choice of path. One obtains two sets, $S = g^{-1}(1)$ and $S^\comp = T= g^{-1}(-1)$ and the cut-set $\del S$ is exactly given by edges whose sign is negative.
\end{proof}

\begin{prop}\label{treduc-p}
If $G$ is coherently signed, then the spectrum of the (usual) Laplacian is the same as that of the twisted Laplacian. 
\end{prop}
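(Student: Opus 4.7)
The plan is to exhibit an explicit unitary conjugating $\Delta^\tau$ to the usual Laplacian $\Delta$, namely a diagonal ``gauge transformation'' coming from the cut-set structure provided by the previous lemma.

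Without loss of generality I may assume $G$ is connected (otherwise treat each connected component separately; the spectrum of each Laplacian is the disjoint union of the spectra of the restrictions). Using the preceding lemma, choose a partition $V = S \sqcup T$ such that every edge inside $S$ or inside $T$ is positively signed and every edge between $S$ and $T$ is negatively signed. Define the diagonal operator $U : \rr^V \to \rr^V$ by $(Uf)(x) = r_{S,T}(x) f(x)$, where $r_{S,T}$ is the $\pm 1$ function from the beginning of the subsection. Since $U^2 = \Id$, the operator $U$ is self-adjoint and unitary, so conjugation by $U$ preserves the spectrum.

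It remains to check that $U \Delta^\tau U = \Delta$. Expanding,
\[
(U \Delta^\tau U f)(x) = r_{S,T}(x)\Big(d(x) r_{S,T}(x) f(x) - \sum_{y \in N(x)} \sgn(x,y) \, r_{S,T}(y) f(y)\Big).
\]
The diagonal term is $d(x) f(x)$. For the off-diagonal term, observe that if $x,y$ are both in $S$ or both in $T$, then $r_{S,T}(x) r_{S,T}(y) = +1$ and by construction $\sgn(x,y) = +1$; if $x$ and $y$ lie in different sides, then $r_{S,T}(x) r_{S,T}(y) = -1$ and $\sgn(x,y) = -1$. In either case $r_{S,T}(x) r_{S,T}(y) \sgn(x,y) = +1$, so
\[
(U \Delta^\tau U f)(x) = d(x) f(x) - \sum_{y \in N(x)} f(y) = (\Delta f)(x).
\]
Hence $\Delta^\tau = U \Delta U$ is unitarily conjugate to $\Delta$, and the two operators have identical spectra.

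The argument is essentially a gauge computation and there is no serious obstacle; the only subtlety is the reduction to the connected case and making sure to invoke the previous lemma to produce the partition $(S,T)$ with the correct sign pattern, which is precisely what makes the cross-terms in the conjugation line up.
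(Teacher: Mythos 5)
Your proof is correct and follows essentially the same route as the paper: both use the cut-set lemma to produce the partition $V = S \sqcup T$ and then conjugate $\Delta^\tau$ by the diagonal involution $f \mapsto r_{S,T}\cdot f$ (the paper calls it $\tau$, you call it $U$) to recover $\Delta$. Your version just spells out the sign cancellation $r_{S,T}(x)\,r_{S,T}(y)\,\sgn(x,y)=+1$ and the reduction to connected components more explicitly.
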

\begin{proof}
If $G$ is coherently signed, then, by the previous lemma, there are subsets $S$ and $T$ with $T=S^\comp$ and $E^-(S) = E^-(T) = E^+(S:T) = \vide$. For a function $f: V \to \rr$, define $\tau f(x) = f(x)$ if $x \in S$ and $= -f(x)$ if $x \in T$. Then $\Delta^\tau f = \tau \Delta \tau f$ but $\tau \circ \tau = \Id$. Since $\Delta$ and $\Delta^\tau$ are conjugate operators, they have the same spectrum. 
\end{proof}

\subsection{Estimates of the first eigenvalue}

Most of the proofs in this subsection are adapted from Desai \& Rao \cite{DR}.
\begin{lem}
Let $S$ and $T$ be disjoint subsets of $V$. Then $\pgen{ r_{S,T} , \Delta^\tau r_{S,T} }\geq 4 e_{mc}(S \cup T) + |\del (S \cup T)|$
\end{lem}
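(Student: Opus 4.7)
The plan is to express $\langle r_{S,T}, \Delta^\tau r_{S,T}\rangle$ as a Dirichlet-type sum over edges and then perform a case analysis on which edges contribute.

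First I would note (either from the identity $\nabla^{*\tau}\nabla^\tau = 2\Delta^\tau$ established earlier, or by direct expansion using $\sgn(e)^2 = 1$) that for any $f: V \to \rr$,
\[
\pgen{f, \Delta^\tau f} = \sum_{e = \{x,y\} \in E} \big( f(x) - \sgn(e)\, f(y) \big)^2,
\]
the sum being over unoriented edges. I would then plug in $f = r_{S,T}$ and sort the edges into three types: edges with both endpoints outside $S \cup T$ (contribute $0$); edges with exactly one endpoint in $S \cup T$ (contribute $1$ each, summing to $|\del(S \cup T)|$); and edges internal to $S \cup T$.

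For the internal edges $e = \{x,y\}$, a short case check shows that the summand equals $0$ whenever the sign pattern is ``consistent'' with the partition $(S,T)$, namely when either $\sgn(e) = +1$ with both endpoints on the same side or $\sgn(e) = -1$ with endpoints on opposite sides; otherwise the summand equals $4$. Call the inconsistent edges the \emph{bad} edges, and let $B$ denote their number. Then
\[
\pgen{r_{S,T}, \Delta^\tau r_{S,T}} = 4B + |\del(S \cup T)|,
\]
so it only remains to show $B \geq e_{mc}(S \cup T)$.

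For this final step, I would observe that deleting exactly the bad edges from the induced graph $G|_{S \cup T}$ leaves a signed subgraph in which positively signed edges lie within $S$ or within $T$ and negatively signed edges cross between $S$ and $T$. Any cycle in this subgraph crosses between $S$ and $T$ an even number of times, hence traverses an even number of negative edges and so has positive sign product. Thus the reduced subgraph is coherently signed, which by definition of $e_{mc}$ gives $B \geq e_{mc}(S \cup T)$, completing the proof. The only slightly delicate point is the last observation that bad-edge removal actually yields a coherent signing; everything else is a direct computation.
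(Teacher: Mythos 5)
Your proof is correct and follows essentially the same route as the paper: expand $\pgen{r_{S,T},\Delta^\tau r_{S,T}}$ as an edge sum, observe that the ``bad'' internal edges $E^-(S)\cup E^-(T)\cup E^+(S\!:\!T)$ each contribute $4$ and the boundary edges each contribute $1$, and conclude via $|E^-(S)|+|E^-(T)|+|E^+(S\!:\!T)|\geq e_{mc}(S\cup T)$. The only difference is that the paper dismisses the final inequality with ``follows from the definition of $e_{mc}$,'' whereas you spell out (correctly) why deleting the bad edges leaves a coherently signed induced graph on $S\cup T$.
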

\begin{proof}
This is but a straightforward computation:
\[
\begin{array}{rl}
2 \pgen{ r_{S,T} , \Delta^\tau r_{S,T} } 
  &= \| \nabla^\tau r_{S,T}\|_2^2  \\
  &= 8| E^-(S) |  + 8| E^-(T) | + 2|\del (S \cup T)| + 8 | E^+(S:T)| \\
  & \geq 8 e_{mc}(S \cup T) + 2 |\del (S \cup T)|.
\end{array}
\]
The last line follows from the definition of $e_{mc}$.
\end{proof}

Let $h(S) = \minn{T \subset S} \frac{|\del T|}{|T|}$.
\begin{lem}\label{tlemh-l} 
\emph{(see \cite[Proof of Theorem 7.5.16]{CRS})}
Suppose $S \subsetneq V$, $|V| \geq 4$, $|S| \leq |V|/2$ and $f$ is a function on $V$ satisfying  $f(x)>0$ for all $x \in S$ and $f(x) =0$ otherwise. Then
\[
\| \nabla f \|_2^2 \geq \Big( d_{max} - \sqrt{ d_{max}^2 - h(S)^2 } \Big) \| f\|_2^2 
\]
\end{lem}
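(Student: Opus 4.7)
The plan is to adapt the classical Cheeger-type argument (in the form of [CRS, proof of Theorem 7.5.16]) to subsets of $S$. The heart of the proof is a co-area/level-set computation. Order the elements of $S$ as $x_1, \ldots, x_s$ with $f(x_1) \geq f(x_2) \geq \cdots \geq f(x_s) > 0$, and consider the superlevel sets $L_t := \{z \in V : f(z)^2 \geq t\}$ for $t > 0$. Because $f$ vanishes off $S$, every nonempty $L_t$ is an initial segment $\{x_1, \ldots, x_i\}$ and hence a subset of $S$; by definition of $h(S)$ this forces $|\del L_t| \geq h(S)\, |L_t|$ for each such $t$.

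For every edge $\{x,y\}$, writing $f(x) \geq f(y)$, one has $f(x)^2 - f(y)^2 = \int_{f(y)^2}^{f(x)^2} \dd t$; interchanging the sum over edges with the integral gives
$$\sum_{\{x,y\} \in E} |f(x)^2 - f(y)^2| \;=\; \int_0^\infty |\del L_t|\, \dd t \;\geq\; h(S) \int_0^\infty |L_t|\, \dd t \;=\; h(S)\, \|f\|_2^2.$$
Next I would apply Cauchy-Schwarz to the factorisation $|f(x)^2-f(y)^2| = |f(x)-f(y)|\cdot|f(x)+f(y)|$, and then use the parallelogram identity $(a-b)^2+(a+b)^2 = 2(a^2+b^2)$, which summed over (unoriented) edges yields $\sum_e (f(x)-f(y))^2 + \sum_e (f(x)+f(y))^2 = 2\sum_x d(x) f(x)^2 \leq 2 d_{max}\|f\|_2^2$. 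Writing $a := \sum_{\{x,y\}} (f(x)-f(y))^2$ (which, up to the paper's normalisation, is $\|\nabla f\|_2^2$), substitution turns the Cauchy-Schwarz estimate into the quadratic inequality
$$h(S)^2 \|f\|_2^4 \;\leq\; a \bigl(2 d_{max}\|f\|_2^2 - a\bigr), \qquad \text{i.e.}\qquad a^2 - 2 d_{max} \|f\|_2^2\, a + h(S)^2 \|f\|_2^4 \;\leq\; 0.$$

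Since any $T \subseteq S$ satisfies $|\del T| \leq \sum_{x \in T} d(x) \leq d_{max} |T|$, one has $h(S) \leq d_{max}$, so the discriminant is non-negative and the roots of the above quadratic (in $a$) are $d_{max} \|f\|_2^2 \bigl(1 \pm \sqrt{1 - h(S)^2/d_{max}^2}\bigr)$. Reading off the lower root gives $a \geq \bigl(d_{max} - \sqrt{d_{max}^2 - h(S)^2}\bigr) \|f\|_2^2$, which is the desired inequality. The only genuinely delicate step is the very first one: all level sets $L_t$ must sit inside $S$ in order to feed $h(S)$ uniformly into the co-area bound, which is exactly what the hypothesis ``$f>0$ on $S$ and $f=0$ elsewhere'' guarantees. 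The remainder is the standard algebraic routine (Cauchy-Schwarz + parallelogram identity + quadratic formula) that underlies all Cheeger-style bounds, and is insensitive to the mild side conditions $|V| \geq 4$ and $|S| \leq |V|/2$ stated in the lemma.
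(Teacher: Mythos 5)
The paper does not reproduce a proof of this lemma; it simply cites \cite[Proof of Theorem 7.5.16]{CRS}, and your argument is precisely the standard Cheeger/Alon--Milman computation that that reference uses: co-area formula on superlevel sets of $f^2$, then Cauchy--Schwarz together with the parallelogram identity to produce a quadratic inequality in $\|\nabla f\|_2^2$. Your reconstruction is correct, including the one genuinely delicate point you single out (every nonempty level set $L_t$ with $t>0$ lies in $S$, so $h(S)$ feeds into the co-area bound uniformly); the normalisation caveat you note about $\|\nabla f\|_2^2$ being the sum over unordered edges is exactly the convention needed for the bound as stated, so the proof matches the cited source in essence and in detail.
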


\begin{teo}
If $\mu_1$ is the smallest eigenvalue of $\Delta^\tau$ on $G$ then $\mu_1 \leq \tilde{\psi}(G) \leq  2 \psi(G)$.
\end{teo}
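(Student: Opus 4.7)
The plan is to handle the two inequalities separately. The bound $\tilde\psi(G)\le 2\psi(G)$ is immediate: termwise $|\del S|+4e_{mc}(S)\le 2(|\del S|+2e_{mc}(S))$, which survives the minimum over non-empty $S$.

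For the main inequality $\mu_1\le\tilde\psi(G)$ I would use the Rayleigh quotient $\mu_1=\min_{f\ne 0}\pgen{f,\Delta^\tau f}/\|f\|_2^2$ with a well-chosen test function. Pick $S$ realising the minimum defining $\tilde\psi(G)$. The key combinatorial observation is
\[
e_{mc}(S)=\minn{S=S_+\sqcup S_-}\bigl(|E^-(S_+)|+|E^-(S_-)|+|E^+(S_+:S_-)|\bigr),
\]
because a coherent signing of the graph induced on $S$ is exactly a bipartition whose internal edges are positive and whose crossing edges are negative, and the right-hand side counts the edges one must delete to enforce this. Take a partition $(S_+,S_-)$ achieving the minimum and set $f=r_{S_+,S_-}$; then $\|f\|_2^2=|S|$.

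It then remains to recompute $\|\nabla^\tau f\|_2^2$ edge by edge, exactly as in the preceding lemma. Internal ``miscoloured'' edges (those deleted to realise $e_{mc}(S)$) contribute $8$ each, correctly coloured ones contribute $0$, and each edge of $\del S$ contributes $2$ (one from each orientation, the outside endpoint carrying value $0$). Using the identity $\pgen{f,\Delta^\tau f}=\tfrac12\|\nabla^\tau f\|_2^2$ established in Section~\ref{s-basprop} yields $\pgen{f,\Delta^\tau f}=4e_{mc}(S)+|\del S|$, and the Rayleigh quotient produces $\mu_1\le (4e_{mc}(S)+|\del S|)/|S|=\tilde\psi(G)$. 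The only non-routine step is the characterisation of $e_{mc}(S)$ as a minimum over bipartitions of $S$ displayed above; everything else mirrors the bookkeeping of the previous lemma.
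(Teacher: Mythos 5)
Your proposal is correct and follows essentially the same route as the paper: choose $S$ realising the minimum for $\tilde\psi$, split it into $S_+\sqcup S_-$ realising $e_{mc}(S)$, use $r_{S_+,S_-}$ as a Rayleigh test function, and compute $\|\nabla^\tau r_{S_+,S_-}\|_2^2$ edge by edge. The one thing you do more carefully than the paper is to state explicitly that $e_{mc}(S)=\min_{S=S_+\sqcup S_-}\bigl(|E^-(S_+)|+|E^-(S_-)|+|E^+(S_+:S_-)|\bigr)$ and that this minimum is attained; the paper merely asserts ``there exist disjoint $S,T$ for which the inequality \dots\ is an equality'' without naming the underlying characterisation (which itself leans on the earlier lemma that negative edges of a coherently signed graph form a cut-set). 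You should, however, note that removing the bad-edge set of \emph{any} bipartition always yields a coherent signing, and conversely any edge set whose removal makes the induced graph coherent contains the bad-edge set of some bipartition; both directions are needed to justify the displayed identity, not just the ``coherent implies bipartition'' direction you mention.
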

\begin{proof}
There exists disjoint sets $S$ and $T$ for which the inequality $|E^-(S)|+|E^-(T)|+ |E^+(S,T)| \geq  e_{mc}(S \cup T)$ is an equality. Consequently, look at 
\[
\frac{ \pgen{ r_{S,T} , \Delta^\tau r_{S,T} } }{ \| r_{S,T} \|_2^2} = \frac{ 4 e_{mc}(S \cup T) + |\del (S \cup T)|}{|S \cup T|} \leq \tilde{\psi}(G) \leq 2 \psi(G). \qedhere
\]
\end{proof}

\begin{teo}
If $\mu_1$ is the smallest eigenvalue of $\Delta^\tau$ on $G$ then $\mu_1 \geq d_{max} - \sqrt{ d_{max}^2 - \psi(G)^2 } $.
\end{teo}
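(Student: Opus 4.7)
The plan is to pull the problem back to a classical Cheeger estimate on the $2$-lift $\tilde G$ of $G$, where Lemma \ref{tlemh-l} already does the analytic work, and then to show that the ``inner'' Cheeger constant of the lift controls $\psi(G)$. Construct $\tilde G$ on $\tilde V = V\times\{+,-\}$: a positive edge $\{x,y\}$ of $G$ lifts to the two parallel edges $\{x^+,y^+\},\{x^-,y^-\}$, while a negative edge lifts to the two crossing edges $\{x^+,y^-\},\{x^-,y^+\}$. A standard computation (the one alluded to in \S\ref{s-intro} as motivation for $\Delta^\tau$) shows that the ordinary unsigned Laplacian $\tilde\Delta$ of $\tilde G$ commutes with the sheet-swap involution $\sigma(x^\pm)=x^\mp$ and coincides with $\Delta^\tau$ on the antisymmetric subspace $\{F:F(x^-)=-F(x^+)\}$. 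Lifting an eigenfunction $f$ of $\Delta^\tau$ for $\mu_1$ via $\tilde f(x^\pm)=\pm f(x)$ therefore produces an eigenfunction of $\tilde\Delta$ with the same eigenvalue.

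Set $\tilde f_+=\max(\tilde f,0)$; its support $\tilde W = \{x^+:f(x)>0\}\cup\{x^-:f(x)<0\}$ has $|V^+|+|V^-|\leq|V|=|\tilde V|/2$ elements, so Lemma \ref{tlemh-l} applies to $\tilde f_+$ in $\tilde G$. Combined with the standard truncation $\tilde\Delta\tilde f_+(x)\leq\tilde\Delta\tilde f(x)=\mu_1\tilde f_+(x)$ valid on $\tilde W$ (and with $\tilde f_+\equiv 0$ off $\tilde W$), which gives $\pgen{\tilde f_+,\tilde\Delta\tilde f_+}\leq\mu_1\|\tilde f_+\|_2^2$, and with the identity relating $\|\nabla\cdot\|_2^2$ and $\pgen{\cdot,\tilde\Delta\cdot}$ recorded in \S\ref{s-basprop}, this yields
\[
 \mu_1 \;\geq\; d_{max}-\sqrt{d_{max}^2-h(\tilde W)^2}.
\]
The argument is thus reduced to showing $h(\tilde W)\geq\psi(G)$.

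Fix any nonempty $\tilde T\subset\tilde W$. Since $\tilde W\cap\sigma\tilde W=\vide$, $\tilde T$ necessarily decomposes as $\tilde T=A^+\sqcup B^-$ with $A\subset V^+$, $B\subset V^-$, in particular with $A\cap B=\vide$; set $W=A\cup B$, so $|\tilde T|=|W|$. A case inspection of the two lifts of each edge $\{x,y\}$ of $G$, keyed on whether $x,y$ lie in $A$, in $B$, or in $V\setminus W$, shows that an edge of $G$ with a single endpoint in $W$ contributes exactly $1$ to $|\del_{\tilde G}\tilde T|$, whereas an edge of $G$ with both endpoints in $W$ contributes $2$ when it is \emph{bad} for the bipartition $(A,B)$ of $W$ (\ie a positive edge between $A$ and $B$, or a negative edge inside $A$ or inside $B$) and $0$ otherwise. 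Summing,
\[
 |\del_{\tilde G}\tilde T| \;=\; |\del_G W|+2\cdot\#\{\text{edges of }G[W]\text{ bad for }(A,B)\}\;\geq\; |\del W|+2\,e_{mc}(W),
\]
the last inequality because $e_{mc}(W)$ is by definition the minimum over all bipartitions of $W$ of the bad-edge count. Dividing by $|\tilde T|=|W|$ gives $|\del\tilde T|/|\tilde T|\geq\psi(W)\geq\psi(G)$. The main obstacle I expect is precisely this edge-counting step: one must route the extra contribution coming from sign-violating edges into the $e_{mc}$ term rather than into $|\del W|$, which is exactly what turns the inner Cheeger constant $h(\tilde W)$ of the double cover into the signed constant $\psi(G)$ of the base graph.
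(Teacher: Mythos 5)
Your proof is correct, and it reaches Lemma \ref{tlemh-l} by a genuinely different, arguably cleaner, route than the paper, even though the combinatorial heart is the same. The paper does not pass to the canonical $2$-lift: it builds an asymmetric auxiliary graph $G'$ on $V^+\sqcup V^-$ in which only the sign-violating edges of $E^-(S)\cup E^-(T)\cup E^+(S{:}T)$ are doubled into $(x^+,y^-),(x^-,y^+)$ while all other edges are kept single inside $V^+$; it then sets $g(x^+)=|f(x)|$, $g(x^-)=0$ and checks by hand that $\|\nabla g\|_2^2\le\|\nabla^\tau f\|_2^2$ with $\|g\|_2=\|f\|_2$, so $g$ is supported on $S^+\cup T^+\subset V^+$. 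You instead lift to the honest $2$-cover $\tilde G$, observe that the antisymmetric lift $\tilde f(x^\pm)=\pm f(x)$ is an eigenfunction of the ordinary Laplacian $\tilde\Delta$ for $\mu_1$, and apply the standard positive-part truncation to $\tilde f_+$, whose support $\tilde W=S^+\cup T^-$ mixes the two sheets. This buys you two things: the energy comparison is a textbook truncation step rather than an ad hoc verification, and the argument is visibly the ``$2$-lift picture'' that motivates the paper. Both approaches then feed a nonnegative function supported on at most $|\tilde V|/2$ vertices into Lemma \ref{tlemh-l}, and the edge-count giving $|\del_{\tilde G}\tilde T|=|\del_G W|+2\cdot(\text{bad edges for }(A,B))$ matches the paper's computation of $\del_{G'}W$ (the paper fixes the bipartition $(S_1,T_1)=(W\cap S,W\cap T)$; you allow general $(A,B)$, which is a harmless generalization). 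One small imprecision worth fixing: $e_{mc}(W)$ is not literally \emph{defined} as a minimum of bad-edge counts over bipartitions of $W$; that reformulation follows from the cut-set lemma in \S\ref{s-basprop} (a bipartition with no bad edges is exactly a coherent signing), and you should record that equivalence rather than invoke it ``by definition.''
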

\begin{proof}
Let $f$ be an eigenvector for the eigenvalue $\mu_1$. Let $S = \{ x \in V \mid f(x) >0 \}$ and $T = \{ x \in V \mid f(x) <0 \}$. Of course $S \cup T \neq \vide$. Construct a secondary graph $G' = (V',E')$ as follows. $V'$ is made of two copies of $V$, $V^+$ and $V^-$. $E'$ is as $E$, except for edges $e$ in $E^-(S) \cup E^-(T)$ or $E^+(S:T)$. Such an edge $e=(x,y)$ now doubled as $(x^+,y^-)$ and $(x^-,y^+)$ (where $x^\pm$ and $y^\pm$ are the point corresponding to $x$ and $y$ in $V^\pm$). Let $g:V' \to \rr$ be defined by $g(x^+) = |f(x)|$ and $g(x^-) =0$. Then
\[
\|\nabla g \|^2_{\ell^2(E')} = \sum_{(x',y') \in E'} ( g(y')-g(x'))^2 \leq \| \nabla f \|_{\ell^2(E)}^2 
\]
but $\| g \|_{\ell^2(V')} = \| f \|_{\ell^2(V)}$.

If $E^-(S) \cup E^-(T) \cup E^+(S:T) \neq \vide$ or $S \cup T \neq V$, one may then apply Lemma \ref{tlemh-l} to $g$ on the new (unsigned) graph $G'$. Indeed, for $W \subset S \cup T$ in $V^+$, let $S_1 = W \cap S$ and $T_1 = W \cap T$ then in $G'$, $\del W = 2|E^-(S_1)| +2 |E^-(T_1)| + 2|E^+(S_1:T_1)| + |\del (S_1 \cup T_1)| \geq \psi (S_1 \cup T_1)$. Thus $h(S \cup T) \geq \psi(G)$ and one sees that $\mu_1 \geq d_{max} - \sqrt{ d_{max}^2 - \psi(G)^2 }$.

If $E^-(S) \cup E^-(T) \cup E^+(S:T) = \vide$ and $S \cup T = V$, then $G$ is coherently signed, so there is nothing to prove (as $\mu_1 = \psi(G) =0$).
\end{proof}
Note that, using Taylor expansion, one has $ d_{max} - \sqrt{ d_{max}^2 - \psi(G)^2 } \geq \frac{\psi(G)^2}{2 d_{max}}$.

%
%
%

\subsection{Remarks on $\psi$ and application to $2$-lifts}\label{bili}

The first obvious remark on $\psi(G)$, is that $\psi(G) \leq d_{min}$ (the minimal valency). This is straightforward by taking $S = \{x\}$. In many cases, this can be improved. Let $d_{ave}$ be the average valency of a vertex, \ie
\[
d_{ave} = \frac{1}{|V|} \sum_{x \in V} d(x) = \frac{ 2 |E|}{|V|}.
\]
\begin{lem}\label{spantre}
If $G$ is connected then $\psi(G) \leq \psi(V) \leq d_{ave} - 2 + \frac{2}{|V|}.$
\end{lem}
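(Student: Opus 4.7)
The first inequality $\psi(G) \le \psi(V)$ is immediate from the definition of $\psi(G)$ as a minimum over non-empty subsets.

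For the second inequality, the plan is to expand $\psi(V)$ using the definition. Since $S = V$ has no outgoing edges, $|\del V| = 0$, and therefore
\[
\psi(V) = \frac{2 e_{mc}(V)}{|V|}.
\]
So the problem reduces to bounding $e_{mc}(V) = e_{mc}(G)$, the minimum number of edges one must delete so that the remaining (induced) signed graph is coherently signed.

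The key observation is that a spanning tree can never contain a cycle, hence any signing restricted to a tree is vacuously coherent. Since $G$ is connected, it has a spanning tree $T$ with $|V|-1$ edges. Removing all edges of $G$ not in $T$ leaves a coherently signed subgraph, so
\[
e_{mc}(G) \le |E| - (|V|-1) = |E| - |V| + 1.
\]
Combining this with the formula for $\psi(V)$ and recalling that $d_{ave} = 2|E|/|V|$ gives
\[
\psi(V) \le \frac{2 (|E| - |V| + 1)}{|V|} = d_{ave} - 2 + \frac{2}{|V|},
\]
as required. There is no real obstacle here; the only subtlety is remembering that $e_{mc}$ counts the complement of a maximum coherent subgraph, and that a tree (or forest) is automatically coherent regardless of the initial signing.
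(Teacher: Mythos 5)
Your proof is correct and follows exactly the same approach as the paper: observe $|\del V| = 0$, then bound $e_{mc}(V)$ by noting a spanning tree is automatically coherently signed since it has no cycles, giving $e_{mc}(V) \le |E| - |V| + 1$.
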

\begin{proof}
Note that a spanning tree is always coherently oriented (and has $|V|-1$ edges) hence,
\[
\psi(V) = \frac{2 e_{mc}(V)}{|V|} \leq \frac{2 |E| - 2 |V| + 2}{|V|}. \qedhere 
\]
\end{proof}
For $S \subset V$ let $\mathrm{coh}(S)$ be a graph induced on $S$ where, furthermore, a minimal number of edges has been removed so that there is a coherent signing. Use superscripts to distinguish the graph whose valency is in question, \eg $d_{ave}^{\mathrm{coh}(S)}$ is the average degree of $\mathrm{coh}(S)$ and $d^G(s)$ is the degree of the vertex $s$ in $G$.
\begin{lem}\label{avedeg}
$d^{\mathrm{coh}(S)}_{ave}  + \psi(S) = \frac{1}{|S|} \sum_{s \in S} d^G(s)$
\end{lem}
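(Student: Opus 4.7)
The plan is straightforward bookkeeping: expand both terms on the left using their definitions, observe that the $e_{mc}(S)$ contributions cancel, and then recognize the right-hand side via the handshake lemma.

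First, unpack $d^{\mathrm{coh}(S)}_{ave}$. By definition of $e_{mc}(S)$, the graph $\mathrm{coh}(S)$ has vertex set $S$ and exactly $|E(S)| - e_{mc}(S)$ edges. The handshake lemma on $\mathrm{coh}(S)$ gives
\[
d^{\mathrm{coh}(S)}_{ave} \;=\; \frac{2\bigl(|E(S)| - e_{mc}(S)\bigr)}{|S|}.
\]
Next, plug in the definition $\psi(S) = \dfrac{|\partial S| + 2 e_{mc}(S)}{|S|}$ and add. The $\pm 2 e_{mc}(S)$ terms cancel, leaving
\[
d^{\mathrm{coh}(S)}_{ave} + \psi(S) \;=\; \frac{2|E(S)| + |\partial S|}{|S|}.
\]

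Finally, count edge-endpoints at vertices of $S$ in the ambient graph $G$: each edge in $E(S)$ contributes $2$ to $\sum_{s\in S} d^G(s)$, and each edge of $\partial S$ contributes exactly $1$. Hence $\sum_{s\in S} d^G(s) = 2|E(S)| + |\partial S|$, which matches the expression above after dividing by $|S|$.

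There is no real obstacle here; the only thing to be careful about is that $\mathrm{coh}(S)$ is defined as a subgraph of the induced graph on $S$ (same vertex set $S$, edge set a subset of $E(S)$ of size $|E(S)|-e_{mc}(S)$), so that the handshake lemma is applied on the vertex set $S$ in both computations and the denominators $|S|$ agree.
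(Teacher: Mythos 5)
Your proof is correct and is essentially the same bookkeeping as the paper's. The paper writes $\sum_{s\in S} d^{\mathrm{coh}(S)}(s) = \sum_{s\in S} d^G(s) - |\del S| - 2e_{mc}(S)$ directly and divides by $|S|$, whereas you pass through $|E(S)|$ via the handshake lemma on both graphs; the two computations are the same identity read in slightly different order.
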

\begin{proof}
Note that
\[
d^{\mathrm{coh}(S)}_{ave} = \frac{1}{|S|} \sum_{s \in S} d^{\mathrm{coh}(S)}(s) = \frac{ \sum_{s \in S} d^G(s) - |\del S| - 2 e_{mc}(S)}{|S|}. \qedhere
\]
\end{proof}
For the rest of this section, we restrict to $\ell$-regular graphs. 

The spectral gap of a connected $\ell$-regular graph is the difference between $\ell$ and $\maxx{\lambda \neq \pm \ell} |\lambda|$ where the maximum runs over all eigenvalues of the adjacency matrix different from $\pm \ell$. Bilu \& Linial \cite{BL} introduced the $2$-lift of a graph as a mean to produce larger graphs without reducing the spectral gap of the adjacency matrix. They show that bounding the spectral radius of the new graph boils down to a bound on maximal (in absolute value) eigenvalue of a signed adjacency matrix: $|\lambda| \leq 2 \sqrt{\ell-1}$. 

Since the graphs of interests are $\ell$-regular, it is equivalent to know the spectrum of the signed Laplacian or of the signed adjacency matrix. More precisely, one has the following corollary:
\begin{cor}\label{sadj}
Assume $A$ is the adjacency matrix of $\ell$-regular graph with signs (that is the entries of $A$ are either $0$ or $\pm 1$). If $\lambda_1$ is the largest eigenvalue of $A$, then
\[
\ell -2\psi(G) \leq \ell - \tilde{\psi}(G) \leq  \lambda_1 \leq \sqrt{\ell^2 - \psi(G)^2}.
\]
\end{cor}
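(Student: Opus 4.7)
The plan is to exploit the fact that for an $\ell$-regular graph, the signed adjacency matrix $A$ and the twisted Laplacian $\Delta^\tau$ are related by a simple affine relation, so that their spectra are in bijection, and then directly invoke Theorem \ref{leteo}.

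First I would write down the identity $\Delta^\tau = \ell \, \Id - A$. This is immediate from the definition of $\Delta^\tau$ given in the introduction, since $|N(x)| = \ell$ for every vertex $x$, and $A$ is precisely the matrix with entries $\sgn(x,y)$ on edges and $0$ elsewhere. In particular, any eigenvector of $A$ with eigenvalue $\lambda$ is an eigenvector of $\Delta^\tau$ with eigenvalue $\ell - \lambda$, and the map $\lambda \mapsto \ell - \lambda$ reverses order. Thus the largest eigenvalue $\lambda_1$ of $A$ corresponds to the smallest eigenvalue $\mu_1$ of $\Delta^\tau$, giving $\mu_1 = \ell - \lambda_1$.

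Next I would substitute $\mu_1 = \ell - \lambda_1$ and $d_{max} = \ell$ into the chain of inequalities of Theorem \ref{leteo}, namely
\[
\ell - \sqrt{\ell^2 - \psi(G)^2} \leq \mu_1 \leq \tilde{\psi}(G) \leq 2\psi(G),
\]
and rearrange. The lower bound on $\mu_1$ becomes the upper bound $\lambda_1 \leq \sqrt{\ell^2 - \psi(G)^2}$, while the upper bounds on $\mu_1$ become the lower bounds $\lambda_1 \geq \ell - \tilde{\psi}(G) \geq \ell - 2\psi(G)$. Concatenating these yields exactly the statement of the corollary.

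There is no real obstacle here: the corollary is essentially a translation of Theorem \ref{leteo} through the identity $\Delta^\tau = \ell \, \Id - A$, with the only cosmetic point being that one must flip the direction of the inequalities when passing from $\mu_1$ to $\lambda_1 = \ell - \mu_1$.
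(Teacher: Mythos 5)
Your proof is correct and follows exactly the paper's approach: the paper's entire proof is the single observation that $A = \ell\,\Id - \Delta^\tau$, and you have simply spelled out the order-reversing correspondence $\mu_1 = \ell - \lambda_1$ and the substitution $d_{max} = \ell$ into Theorem \ref{leteo} that the paper leaves implicit.
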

For the proof, simply note that, as an operator $A = \ell \Id - \Delta^\tau$. 

Note that in order to get the (desired) bound $\lambda_1 \leq 2 \sqrt{\ell-1}$ from the above inequality, one needs to have the (quite rare) $\psi(G) \geq \ell-2$. Indeed, this implies that $\psi(V) \geq \ell-2$. However, by lemma \ref{spantre}:
\[
\psi(V) \leq \ell-2 + \frac{2}{|V|}.
\]
This leaves very little room. Such a bound can, for example, be attained if in order to find a coherent signing of the whole graph one needs to remove all edges until there only remains a disjoint union of cycles. Of course, the upper bound from Corollary \ref{sadj} is not sharp for all graphs. Hence the above condition is over-restrictive. 

On the other hand, if $\tilde{\psi}(G) < \ell - 2 \sqrt{\ell-1}$, the the desired bound cannot be attained. To interpret this bound, use lemma \ref{avedeg}:
\[
d_{ave}^{\mathrm{coh}(S)} = \ell - \psi(S)
\]
Hence, as soon as a coherent subgraph $\mathrm{coh}(S)$ of the signed graph has an average degree $>\tfrac{\ell}{2} + \sqrt{\ell-1}$, then the desired bound fails.

Note also that Corollary \ref{sadj} only produces a bound for the largest eigenvalue of the signed adjacency matrix, not for the smallest. When the graph is bipartite, these two eigenvalues are of opposite sign, hence the upper bound on the largest is also a lower bound on the smallest.

\section{The combinatorial Laplacian in higher degree}\label{s-hideg}

\subsection{Basic definitions}

Given a polygonal (usually, simplicial) complex there are two natural operations, the boundary operators $\del_k: \jo{C}_k \to \jo{C}_{k-1}$ on chains and the coboundary operator $\nabla_k: \jo{C}^{k-1} \to \jo{C}^k$ on cochains. To define there operators, one needs to fix an orientation for each cell of the complex. Then
\[
\del_k f(y) = \sum_{x \mid y \in \del x} \delta_{xy} f(x)
\]
where $\delta_{xy}$ is $+1$ if the orientation $x$ induces on $y$ is the same as the orientation of $y$ and $-1$ if not. Likewise
\[
\nabla_{k+1} f(z) = \sum_{x \mid x \in \del z} \delta_{xy}' f(x) 
\]
where $\delta_{xy}'$ is $+1$ if the orientation $z$ induces on $x$ is the same as the orientation of $x$ and $-1$ if not.

When the chains take value in $\rr$ or $\cc$ and the $k$-skeletons are finite (or infinite, and one restricts to $\ell^2$-[co]chains), these two operator are adjoints to each other. There is a natural isomorphism between $\jo{C}_k$ and $\jo{C}^k$ as both identify to functions from the $k$-skeleton to the base field. As Hilbert spaces, this is the isomorphism identifying the space to its dual.

Together these operators define a ``higher degree'' Laplacian by $\Delta_k := \nabla_k \del_k + \del_{k+1} \nabla_{k+1}$. 

Let $A: H_1 \to H_2$ and $B: H_2 \to H_3$ be operators between Hilbert spaces such that $\img A \subset \ker B$. Then,
\[
\begin{array}{rll}
\ker A^* 
&= \{ f \in H_2 \mid A^* f =0 \} \\
&= \{ f \in H_2 \mid \forall g \in H_1, \langle A^* f, g \rangle =0 \} \\
&= \{ f \in H_2 \mid \forall g \in H_1, \langle f, A g \rangle =0 \} 
&= (\img A)^\perp
\end{array}
\]
Furthermore, $\ker (A A^*) = \ker A^*$ since $\langle f, A A^* f \rangle = \langle A^* f, A^* f \rangle = \|A^*f\|^2$. Similarly $\ker B^*B = \ker B$. Thus $(\ker B)^\perp \subset (\img A)^\perp = \ker A^*$. This implies the support of $AA^*$ and $B^*B$ are disjoint and $\ker (AA^* + B^*B) = \ker A^* \cap \ker B = (\img A)^\perp \cap \ker B$.

As mentioned above, putting the $\ell^2$-norm on $\jo{C}_k$ and $\jo{C}^k$, one notes that $\nabla_k = \del_k^*$. This implies that the support of $\nabla_k \del_k$ and $\del_{k+1} \nabla_{k+1}$ are mutually orthogonal and their kernel is exactly $(\img \nabla_k)^\perp \cap \ker \nabla_{k+1}$. This kernel is trivial exactly when the $k^\text{th}$-cohomology is trivial. 

By similar general principles, it is easy to show that the spectrum of $\nabla_k \del_k$ and $ \del_k \nabla_k$ actually coincide.

To see what $\nabla_k \del_k$ looks like, fix an orientation to each element of the $k$-skeleton. Say two element of $X^{[k]}$ are incident if they share a $(k-1)$-cell. If one wishes to avoid multiple edges, it is required they share one such cell unless they are equal. Let $N_{-1}(x) = \{ y \in X^{[k]} \mid y \neq x$ and $y$ is incident with $x \}$. Then
\[
\nabla_k \del_k f (x) = |\del x| f(x) - \sum_{y \in N_{-1}(x) } \eps_{yx} f(y)
\]
where $\del x$ is the set of $(k-1)$-cell in the $k$-cell $x$ and 
\[
\eps_{yx} = \left\{ \begin{array}{ll}
-1 & \text{if } x \text{ and } y \text{ induce the same orientation on their common } (k-1)\text{-cell} \\
+1 & \text{if } x \text{ and } y \text{ induce the opposite orientation on their common } (k-1)\text{-cell} \\ 
\end{array} \right.
\]
The (analogous) formulation for $\del_{k+1} \nabla_{k+1}$ is 
\[
\del_{k+1} \nabla_{k+1} f (x) = |\del^* x| f(x) - \sum_{y \in N_{+1}(x) } \eps_{yx}' f(y)
\]
where $\del^* x$ is the set of $(k+1)$-cell having $x$ as a $k$-cell, $N_{+1}(x)$ is the set of $k$-cells (other than $x$) in the $(k+1)$-cells of $\del^* x$ (perhaps with multiplicity) and 
\[
\eps_{yx}' = \left\{ \begin{array}{ll}
-1 & \text{if } x \text{ and } y \text{ receive the same orientation from their common } (k+1)\text{-cell} \\
+1 & \text{if } x \text{ and } y \text{ receive the opposite orientation from their common } (k+1)\text{-cell} \\ 
\end{array} \right.
\]
Putting all this together (with $\Delta_k = \nabla_k \del_k + \del_{k+1} \nabla_{k+1}$) gives:
\[
\Delta_k f(x) = \Big( |\del x| + |\del^* x| \Big) f(x) - \sum_{y \in N_{-1}(x) } \eps_{yx} f(y) - \sum_{y \in N_{+1}(x) } \eps_{yx}' f(y)
\]
So this is a strange version of the Laplacian, there are two differences. First, there is a ``twist'' by a sign. However, if there is no kernel (\ie the cohomology is trivial in degree $k$), then one can hope to evaluate the smallest non-trivial eigenvalue using Theorem \ref{leteo}. Second, the factor before $f(x)$ does not necessarily fit the number of elements in the sum. There is no direct way out of this. 

\subsection{Application to $\Delta_k$}

Say that the polygonal complex has valency default $\ell$ in degree $k$ if, for any $k$-cell $x$, one has $\ell = |N_{-1}(x)| + |N_{+1}(x)| - |\del x| - |\del^*x|$. If this holds then note that 
\[
(\ell \Id + \Delta_k) f(x) = \Big(  |N_{-1}(x)| + |N_{+1}(x)| \Big) f(x) - \sum_{y \in N_{-1}(x) } \eps_{yx} f(y) - \sum_{y \in N_{+1}(x) } \eps_{yx}' f(y).
\]
This operator can be represented by a twisted Laplacian.\footnote{When valency default fails, one can also try to force it by putting weights on edges. However, one then one looses a direct grasp on the original spectrum.}
\begin{cor}
Let $X$ be polygonal complex with valency default $\ell$ in degree $k$. Let $G$ be the graph whose vertices are the $k$-cells of $X$ and the edges between two vertices are given by their common (bounded) $(k-1)$-cell and common (bounding) $(k+1)$-cell. Put a sign on those edge depending whether the orientations induced are the same or different. Then, the smallest eigenvalue $\mu_1$ of $\Delta_k$ is bounded by
\[
d_{max} - \sqrt{ d_{max}^2 - \psi(G)^2 } -\ell \leq \mu_1 \leq \tilde{\psi}(G) -\ell \leq 2 \psi(G) -\ell.
\]
Furthermore, $\mu_1 = 0$ if and only if $H_k(X) \neq \{0\}$.
\end{cor}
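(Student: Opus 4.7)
The plan is to reduce the statement to Theorem~\ref{leteo} applied to the signed graph $G$, using the valency default hypothesis to identify $\Delta_k + \ell \Id$ with a twisted Laplacian on $G$.

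First I would verify that the two operators agree. The construction of $G$ puts exactly one edge between distinct $k$-cells $x,y$ for each shared $(k-1)$-cell and for each shared $(k+1)$-cell (with multiplicities); the sign attached to the edge is precisely $-\eps_{yx}$ or $-\eps'_{yx}$ respectively, according to whether the orientations induced agree. The valency of $x$ in $G$ is therefore $|N_{-1}(x)| + |N_{+1}(x)|$, and the valency default condition reads
\[
|N_{-1}(x)| + |N_{+1}(x)| = \ell + |\del x| + |\del^* x|.
\]
Matching this against the formula for $\Delta_k f(x)$ derived above, one immediately sees
\[
\Delta^\tau_G f(x) = (\ell \Id + \Delta_k) f(x).
\]

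Consequently the spectrum of $\Delta_k$ is the spectrum of $\Delta^\tau_G$ shifted by $-\ell$, and in particular $\mu_1 = \mu_1^\tau - \ell$ where $\mu_1^\tau$ is the smallest eigenvalue of $\Delta^\tau_G$. Plugging the two-sided bound of Theorem~\ref{leteo}
\[
d_{max} - \sqrt{d_{max}^2 - \psi(G)^2} \;\leq\; \mu_1^\tau \;\leq\; \tilde{\psi}(G) \;\leq\; 2\psi(G)
\]
and subtracting $\ell$ throughout yields the announced chain of inequalities.

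For the kernel statement, apply the general computation carried out earlier in the section to $A = \nabla_k$ and $B = \nabla_{k+1}$ (with $\img \nabla_k \subset \ker \nabla_{k+1}$): one gets $\ker \Delta_k = (\img \nabla_k)^\perp \cap \ker \nabla_{k+1}$, which is (Hodge-theoretically) the $k^{\text{th}}$ cohomology of $X$. Thus $\mu_1 = 0$ exactly when $H_k(X) \neq \{0\}$. The only slightly subtle point is checking that the signs in the constructed graph $G$ match those appearing in $\Delta_k$ (and, for edges coming from multiple shared cells, that they really are realized as separate edges in $G$, possibly yielding a multigraph); once this bookkeeping is done, everything follows from Theorem~\ref{leteo} without further work.
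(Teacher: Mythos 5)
Your proof follows exactly the route the paper intends (the paper itself offers no separate argument for the corollary: it is presented as an immediate consequence of the preceding identification of $\ell\Id + \Delta_k$ with a twisted Laplacian, together with Theorem~\ref{leteo} and the general kernel computation at the start of \S3.1). The structure is correct: identify $\Delta^\tau_G = \ell\Id + \Delta_k$ via the valency default condition, shift the spectrum by $-\ell$, and identify $\ker\Delta_k$ with the $k$-th (co)homology via $\ker(AA^*+B^*B) = (\img A)^\perp \cap \ker B$.

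One small bookkeeping slip: you claim the edge sign should be $-\eps_{yx}$ (resp.\ $-\eps'_{yx}$). Matching
\[
\Delta^\tau f(x) = d(x)f(x) - \sum_{y\in N(x)}\sgn(x,y)f(y)
\]
against
\[
(\ell\Id + \Delta_k)f(x) = \bigl(|N_{-1}(x)|+|N_{+1}(x)|\bigr)f(x) - \sum_{y\in N_{-1}(x)}\eps_{yx}f(y) - \sum_{y\in N_{+1}(x)}\eps'_{yx}f(y)
\]
forces $\sgn(x,y)=\eps_{yx}$ (resp.\ $\eps'_{yx}$), i.e.\ \emph{negative} when the orientations agree, not $-\eps_{yx}$. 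This does not damage the architecture of your argument (which is correct), but it flips the signing of $G$ and therefore the value of $\psi(G)$, so the convention needs to be fixed.
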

Let us indulge into a few remarks.

It is quite obvious that if one starts from an orientable complex 
and looks at the top dimensional Laplacian, orientability means all the above signs may be chosen to be $+1$. Maximality of the dimension will make the term $\del_{k+1} \nabla_{k+1}$ vanish. In fact, one recovers the Laplacian on the $0$-skeleton of the dual complex. Also, if one looks at the (obviously orientable) $0$-skeleton, then one obtains the ``usual'' Laplacian.
 
If the cells are all of the same type, the ``valency default'' condition can be slightly simplified. For example, in a simplicial complex, $|N_{+1}(x)| = (k+1) |\del^* x|$ and $|\del x| =  k+1$, so that the condition reads $\ell = k( |\del^* x| -1) + |N_{-1}(x)| -1$. In a cubical complex, one has $|N_{+1}(x)| = (2k+1) |\del^* x|$ and $|\del x| =  2k$, so that the condition reads $\ell = 2k( |\del^* x| -1) + |N_{-1}(x)|$. 

As such for simplicial or cubical complexes, having valency default $\ell$ for some $\ell$ is the analogue of constant valency for graphs. Indeed, one needs only to check that the number of neighbours via higher and via lower dimensional cells are constant.

Also, part of the left-hand side can be simplified since $d_{max} -\ell = |\del x|+ |\del^*x|$ for the $k$-cell $x$ such that $|N_{+1}(x)|+|N_{-1}(x)|$ is maximal.

One could think that focusing on only one part (\ie $\nabla_k \del_k$ or $\del_{k+1} \nabla_{k+1}$) of the higher degree Laplacian would be simpler. Indeed, the valency default condition is then more easily verified. However, both operators will have a (usually large) kernel. This forces to study further eigenvalues which are not so easy to bound. As mentioned before, an upper bound seems possible to attain using the methods of Daneshgar \& al. \cite{Dan}.

\subsection{Rayleigh quotient}

Another classical way to obtain upper bounds on eigenvalue is to use the Rayleigh quotient: the smallest eigenvalue of $\Delta_k$ is
\[
\minn{0 \neq f \in C_k} \frac{ \langle f, \Delta_k f \rangle}{\|f\|^2} = \minn{0 \neq f \in C_k} \frac{ \|\del_k f\|^2 + \| \nabla_{k+1} f\|^2}{\|f\|^2}.
\]
Again, this ratio is $\neq 0$ if and only if $H^k(X) = \{0\}$. Unsurprisingly, this ratio measures the default of a given $f \in C_k$ to be without boundary (via $\|\del_k f\|^2$) and its distance to the space of boundaries. 

When $X$ is nice enough, \eg comes from a manifold, a more convincing way of putting this forth is to say that $f$ (seen as an element of $\jo{C}_k$) has a small boundary and the Poincaré dual of $f$ seen as an element of $C^k$ ($\hat{f} \in C_{n-k}$) also has a small boundary\footnote{
Recall that the Poincaré dual $\widehat{X}$ of a (simplicial or cubical) complex $X$ is made of pieces of the barycentric subdivision $B(X)$ of $X$. It is defined by associating to each (oriented) $k$-cell of $X$ a (oriented) $(n-k)$-cell so that the intersection number is $1$. The $(n-k)$-cell is built up of many $(n-k)$-cells in the barycentric subdivision.}. In equation:
\[
\mu_1(\Delta_k) = \minn{0 \neq f \in C_k} \frac{ \|\del_k f\|^2 + \| \del_{n-k} \hat{f}\|^2}{\|f\|^2}.
\]
This description is coherent with the fact that the spectrum of $\Delta_k$ and $\Delta_{n-k}$ are the same in manifolds.

Let us give a simple example to illustrate this.
\begin{exa}
Let $X$ be the cubical complex of a flat torus of dimension $n$, \ie if $Y$ is the universal covering the ``usual'' $K(\zz^n,1)$ (the cube with opposed faces identified) then $X = Y / \Lambda$ where $\Lambda = \oplus_{i=1}^n k_i \zz$ and $k_i \in \zz_{>0}$. Orient all cells with the ``natural'' orientation coming from $\rr^n$. 

Take $f$ to be characteristic function of the loop which goes once around the torus in the $1^\text{st}$-direction and whose other coordinates are all $0$. In other words, $f$ takes value $1$ one the edges of the from $(i,0,\ldots,0)$ to $(i+,0,\ldots,0)$, where $i+ = i+1$ if $i <k_1-1$ and $i+ =0$ if $i = k_1-1$ (and $f$ takes $0$ everywhere else). 

It is straightforward to check that $\del_k f =0$, \ie $f$ represents the class of an orientable subcomplex without boundary. However, $\nabla_{k+1} f \neq 0$. This can be seen because there is a $2$-chain representing a cylinder so that one boundary component will be $f$ and the other will be somewhere (in fact, anywhere as long as one does not close up the cylinder). This $2$-chain will have a boundary which is not orthogonal to $f$. Another way to see it is that the Poincaré dual of $f$ is given by $(n-1)$-cells which are obtained from each other by a perpendicular translation. This clearly has a boundary.

Now, take $g$ to be the $1$-chain taking value $1$ on edges of the form $(i,\vec{a})$ to $(i+,\vec{a})$ where $\vec{a}$ is any $(n-1)$-tuple (so that the given edge belongs to $X$). In other words, $g$ is given by all possible translates of $f$. As such, it still has no boundary. Moreover, its Poincaré dual is made of $(n-1)$-dimensional torii, hence has no boundary. So $g$ belongs to the kernel of $\Delta_1$. 

Repeating this construction for each axis will give an orthogonal basis of the kernel of $\Delta_1$ (corresponding to the usual basis of homology). It is also not too hard to describe the elements of the kernel for other degrees.

Using this orthogonal basis, it is relatively easy to get an upper bound on the first non-trivial eigenvalue. If $f$ is still to be the test function, one needs to assume that $\prod_{i \neq 1} k_i \neq 1$ (otherwise $f= g$, and, hence, is in the kernel). Projecting $f$ perpendicularly to the kernel and using the result as a test function for the Rayleigh quotient gives $2(n-1)/ \Big( 1- 1/ \prod_{i \neq 1} k_i \Big)$. One can change $i\neq 1$ by $i \neq j$ when doing the $j^\text{th}$ direction.  \hfill $\Diamond$
\end{exa}

Recall that,
\[
\| \del_k f \|^2 = \sum_{y \in X^{[k-1]}} \Big( \sum_{x \mid y \in \del x} \delta_{xy} f(x) \Big)^2
\]
where $\delta_{xy}$ is $+1$ if the orientation $x$ induces on $y$ is the same as the orientation of $y$ and $-1$ if not. A similar expression can be written down for $\nabla_{k+1}$.

By using a test function of the type $f= r_{S,T}$ (\ie taking value $1$ on $S$ and value $-1$ on $T$), one can readily give an upper bound to the above Rayleigh quotient. To do so, define the signed graph $G$ as before, and label the edges by the $(k-1)$-cell or the $(k+1)$-cell which gave rise to them. Then note that the contribution to the sum of a term of $\|\del_k f\|^2$ indexed by $y$ comes from edges labelled by $y$ where
\begin{enumerate}\renewcommand{\labelenumi}{\normalfont (\roman{enumi})}\setlength{\itemsep}{0pt} 
\item either the extremities are both $S$ or both in $T$ and the sign is $+$,
\item the extremities are in $S$ and $T$ and the sign is $-$, 
\item or the edge leaves $S \cup T$.
\end{enumerate}
Note that in the first two cases the contribution is (in absolute value) of $2$, whereas in the last case it is of $1$. Denote the set of edges labelled by $y$ which fall in the first two cases by $D(y)$, and the set of those who fall in the last case by $B(y)$. Then $\| \del_k f \|^2 = \sum_{y \in X^{[k-1]}} (2|D(y)|+|B(y)|)^2$. If there is a upper bound $K$ (independent of $y$) on $|\{x \mid y \in \del x\}|$, then
\[
\| \del_k f \|^2 = \sum_{y \in X^{[k-1]}} (2|D(y)|+|B(y)|)^2 \leq 2K \sum_{y \in X^{[k-1]}} 2|D(y)|+|B(y)|
\]
Repeating the argument with $\nabla_{k+1} f$, and assuming that, for the same $K$, one has $|\{ z \mid x \in \del z\}| \leq K$ for any $k$-cell $x$, one finds 
\[
\| \del_k f \|^2 + \|\nabla_{k+1} f \|^2 \leq 2K ( 2|E^-(S)|+ 2|E^-(T)| + 2|E^+(S:T)| + |\del(S \cup T)|. 
\]
Hence, by looking at all $f= r_{S,T}$, one has
\[
\minn{0 \neq f \in C_k} \frac{ \langle f, \Delta_k f \rangle}{\|f\|^2}  \leq 2 K \psi(G).
\]
This indicates that a quantity of the type $\psi(G)$ is, intuitively, still the correct thing to look at, even if the valency default condition does not hold. In the process of making this upper bound, there is of course a lot of potential cancellation neglected. This also explains the much better bound when one actually has the valency default condition.

\end{document}